\def\dashfill{\cleaders\hbox to 2em{-}\hfill}
\renewcommand{\emph}[1]{\textit{#1}}
\newcommand{\citep}{\cite}
\setlist[itemize]{leftmargin=0.35in}
\newcommand{\QPochhammer}[3]{\left(#1; #2\right)_{#3}} 
\newcommand{\cf}{\textit{cf.\ }} 
\newcommand{\Iverson}[1]{\ensuremath{\left[#1\right]_{\delta}}} 
\DeclareMathOperator{\poly}{poly}
\DeclareMathOperator{\Num}{Num}
\DeclareMathOperator{\Denom}{Denom}
\title[Maxie D. Schmidt $\qquad$ \texttt{maxieds@gmail.com}]{
       New Recurrence Relations and Matrix Equations for Arithmetic 
       Functions Generated by Lambert Series} 
\author{Maxie D. Schmidt \\ 
        \href{mailto:maxieds@gmail.com}{maxieds@gmail.com}}         
\address{School of Mathematics \\ 
         Georgia Institute of Technology \\
         Atlanta, GA 30332 \\ 
         USA
        }
\email{maxieds@gmail.com}
\date{\today}
\keywords{Lambert series; matrix factorization; M\"obius function; Euler totient function; 
          generalized sum-of-divisors function; Liouville's function. } 
\subjclass[2010]{11A25; 05A15; 11N64; 11Y70; 05A30. }
\theoremstyle{plain} 
\newtheorem{theorem}{Theorem}
\newtheorem{lemma}[theorem]{Lemma}
\newtheorem{cor}[theorem]{Corollary}
\numberwithin{theorem}{section}
\theoremstyle{definition} 
\newtheorem{definition}[theorem]{Definition}
\newtheorem{remark}[theorem]{Remark}
\begin{document}

\begin{abstract} 
We consider relations between the pairs of sequences, $(f, g_f)$, generated by the 
Lambert series expansions, $L_f(q) = \sum_{n \geq 1} f(n) q^n / (1-q^n)$, in $q$. 
In particular, we prove new forms of 
recurrence relations and matrix equations defining these sequences for all 
$n \in \mathbb{Z}^{+}$. The key ingredient to the proof of these results is 
given by the statement of Euler's pentagonal number theorem expanding the 
series for the infinite $q$-Pochhammer product, 
$(q; q)_{\infty}$, and for the first $n$ terms of the partial products, 
$(q; q)_n$, forming the denominators of the rational $n^{th}$ partial sums of 
$L_f(q)$. Examples of the new results given in the article include 
new exact formulas for and applications to 
the Euler phi function, $\phi(n)$, the M\"obius function, $\mu(n)$, the 
sum of divisors functions, $\sigma_1(n)$ and $\sigma_{\alpha}(n)$, for 
$\alpha \geq 0$, and to Liouville's lambda function, $\lambda(n)$. 
\end{abstract} 

\maketitle

\section{Introduction} 

\subsection{Overview and motivation}

Our new results provide \emph{exact} matrix-based formulas 
for a wide range of classical special 
arithmetic functions expanded in well-known Lambert series expansions of the 
form defined in the next subsection. The first form of the 
exact formulas for these special case arithmetic functions, $f(n)$ 
are stated through a matrix factorization result of the following form for all 
natural numbers $n \geq 0$: 
\begin{align} 
\label{eqn_first_matrix_factorization_result_stmt_v1} 
\left(f(k)\right)_{1 \leq k \leq n} & = A_n^{-1} \cdot 
     \left(B_m(f)\right)_{0 \leq m < n}. 
\end{align} 
The $n \times n$ matrices $A_n$ and $A_n^{-1}$ implicit to the last equation are 
always independent of the choice of the function $f(n)$. Moreover, the right-hand-side 
vector whose entries are given by $B_m(f)$ is independent of the $A_n$ and depends 
only on a finite sum determined by $f$ for all $m,n \geq 0$. 

The matrix equation in \eqref{eqn_first_matrix_factorization_result_stmt_v1} 
reflects a more general class of so-termed \emph{Lambert series factorization} 
results of the form 
\[
\sum_{n \geq 1} \frac{f(n) q^n}{1-q^n} = \frac{1}{C(q)} \sum_{n \geq 0} 
     \sum_{k=1}^n s_{n,k} f(k) \cdot q^n, 
\]
where the invertible, lower triangular matrix $A_n$ in the form of 
\eqref{eqn_first_matrix_factorization_result_stmt_v1} corresponds to the 
entries $s_{n,k}$, which are also independent of the function $f$, 
for fixed $n \geq 1$. In the cases of these more general factorizations 
presented in this article, which we derive and prove by separate means here, 
we always have that the series expansion of the factorization 
parameter $C(q)$ is defined through the \emph{$q$-Pochhammer symbol} as 
$C(q) \equiv \QPochhammer{q}{q}{\infty}$. 

Examples of the new formulas for the special arithmetic functions, 
$\mu(n)$, $\phi(n)$, and $\lambda(n)$, that we are able to obtain through the 
forms of the mew matrix factorization equations we prove within the article 
include 
\begin{align*} 
\mu(n) & = \sum_{k=1}^n \left(\sum_{d|n} p(d-k) \mu(n/d)\right) \cdot B_{k-1}(\mu) \\ 
\phi(n) & = \sum_{k=1}^n \left(\sum_{d|n} p(d-k) \mu(n/d)\right) \cdot B_{k-1}(\phi) \\ 
\lambda(n) & = \sum_{k=1}^n \left(\sum_{d|n} p(d-k) \mu(n/d)\right) \cdot B_{k-1}(\lambda), 
\end{align*} 
where $p(n)$ denotes \emph{Euler's partition function} and where the special vector 
entries, $B_m(f)$, from \eqref{eqn_first_matrix_factorization_result_stmt_v1} 
are defined as follows for each $m \geq 0$\footnote{ 
     \underline{\emph{Notation}}: 
     \emph{Iverson's convention} compactly specifies 
     boolean-valued conditions and is equivalent to the 
     \emph{Kronecker delta function}, $\delta_{i,j}$, as 
     $\Iverson{n = k} \equiv \delta_{n,k}$. 
     Similarly, $\Iverson{\mathtt{cond = True}} \equiv 
                 \delta_{\mathtt{cond}, \mathtt{True}}$ 
     in the remainder of the article. 
}: 
\begin{align*} 
B_m(\mu) & := \Iverson{m = 0} + \sum_{b = \pm 1} 
     \sum_{k=1}^{\lfloor \frac{\sqrt{24m+1}-b}{6} \rfloor} 
     (-1)^k \Iverson{m+1-k(3k+b)/2 = 1} \\ 
B_m(\phi) & := m+1 - 
     \sum_{b = \pm 1} 
     \sum_{k=1}^{\lfloor \frac{\sqrt{24m+1}-b}{6} \rfloor} 
     (-1)^{k+1} \left(m+1-\frac{k(3k+b)}{2}\right) \\ 
B_m(\lambda) & := \Iverson{\sqrt{m+1} \in \mathbb{Z}} - 
     \sum_{b = \pm 1} 
     \sum_{k=1}^{\lfloor \frac{\sqrt{24m+1}-b}{6} \rfloor} 
     (-1)^{k+1} \Iverson{\sqrt{m+1-k(3k+b)/2} \in \mathbb{Z}}. 
\end{align*} 
The results we prove within the article also include new recurrence relations 
for the computations of the average order of special arithmetic functions, 
denoted by $\sigma_{g_f, x} := \sum_{n \leq x} g_f(n)$, for fixed functions 
$f(n)$ and its corresponding $g_f(n)$ when $x \geq 1$. The next subsections 
make the expansions of the Lambert series expansions we consider and the 
main theorems proved within the article precise. 

\subsection{Lambert series generating functions} 

In this article, we consider new recurrence relations and matrix equations 
related to \emph{Lambert series} expansions of the form 
\citep[\S 27.7]{NISTHB} \citep[\S 17.10]{HARDYANDWRIGHT} 
\begin{align}
\label{eqn_LambertSeriesfb_def} 
L_f(q) & := 
\sum_{n \geq 1} \frac{f(n) q^n}{1-q^n} = \sum_{m \geq 1} g_f(m) q^m,\ |q| < 1, 
\end{align} 
for prescribed functions $f: \mathbb{Z}^{+} \rightarrow \mathbb{C}$, and some 
$g_f: \mathbb{Z}^{+} \rightarrow \mathbb{C}$ where $g_f(m) = \sum_{d | m} f(d)$. 
There are many well-known Lambert series for special arithmetic functions of the 
form in \eqref{eqn_LambertSeriesfb_def}. 
Examples include the following series where $\mu(n)$ denotes the 
\emph{M\"obius function}, $\phi(n)$ denotes \emph{Euler's totient function}, 
$\sigma_{\alpha}(n)$ denotes the generalized \emph{sum of divisors function}, and 
$\lambda(n)$ denotes \emph{Liouville's function} \citep[\S 27.7]{NISTHB}: 
\begin{align} 
\label{eqn_WellKnown_LamberSeries_Examples} 
\sum_{n \geq 1} \frac{\mu(n) q^n}{1-q^n} & = q, && 
     (f, g_f) := (\mu(n), \Iverson{n = 1}) \\ 
\notag
\sum_{n \geq 1} \frac{\phi(n) q^n}{1-q^n} & = \frac{q}{(1-q)^2}, && 
     (f, g_f) := (\phi(n), n) \\ 
\notag
\sum_{n \geq 1} \frac{n^{\alpha} q^n}{1-q^n} & =  
     \sum_{m \geq 1} \sigma_{\alpha}(n) q^n, && 
     (f, g_f) := (n^{\alpha}, \sigma_{\alpha}(n)) \\ 
\notag
\sum_{n \geq 1} \frac{\lambda(n) q^n}{1-q^n} & = \sum_{m \geq 1} q^{m^2}, && 
     (f, g_f) := (\lambda(n), \Iverson{\text{$n$ is a positive square}}). 
\end{align}

\subsection{New Results} 
\label{Section_NewResults}

We have two interesting cases of \eqref{eqn_LambertSeriesfb_def} to consider: 
\begin{itemize} 

\item[\textbf{1. }] 
           The case where $f(n)$ is our arithmetic function of interest that 
           we wish to study, i.e., in the first, second, and fourth equations in 
           \eqref{eqn_WellKnown_LamberSeries_Examples}; and 
\item[\textbf{2. }] 
            The case where $g_f(n)$ is the interesting arithmetic function we 
            wish to study, i.e., in the third equation from 
            \eqref{eqn_WellKnown_LamberSeries_Examples}. 

\end{itemize} 

\subsubsection{Case 1} 

In the first case, for each $n \geq 1$ and $i \leq n$ we are able to form the 
matrix solutions for $f(n)$ given in 
Theorem \ref{prop_MatrixEquations_fi_eq_AinvBbm}, which 
are expanded in terms of the sequences in the next definition. 

\begin{definition} 
\label{def_afn_ani_matrix_seqs}
For integers $n \geq 0$, the sequences, $a_f(n)$ and $a_{n,i}$, are 
defined to be (\cf Remark \ref{remark_authors_note_on_AnAnInvEntryForms} 
on page \pageref{remark_authors_note_on_AnAnInvEntryForms})\footnote{ 
     \label{footnote_expl_of_floored_terms}
     The floored terms, $\left\lfloor (\sqrt{24n+1}-b) / 6 \right\rfloor$, for 
     $b = \pm 1$ in this and in subsequent formulas in the article 
     correspond to solving for the upper bounds on $k$ in the following 
     inequalities (\cf footnote \ref{footnote_pentagonal_number_sets} on 
     page \pageref{footnote_pentagonal_number_sets}): 
     \begin{align*} 
     0 \leq \frac{k(3k+b)}{2} \leq n & \iff 
     0 \leq k \leq \frac{\sqrt{24n+1}-b}{6}. 
     \end{align*} 
} 
\begin{align*} 
a_f(n) & = 
\sum_{i=1}^{n} f(i) \left( 
     \underset{:= a_{n,i}}{\underbrace{ 
     \sum_{\substack{(k, s) : (s+1)i+k(3k\pm 1) / 2 = n \\ k, s \geq 0}} (-1)^k}}\right) + 
     \Iverson{n = 0} \\ 
a_{n,i} & = 
     \sum_{b = \pm 1} \sum_{s=0}^{\lfloor \frac{n}{i} \rfloor - 1} 
     (-1)^{\lfloor \frac{\sqrt{24(n-(s+1)i)+1}-b}{6} \rfloor} \cdot 
     \Iverson{\frac{\sqrt{24(n-(s+1)i)+1}-b}{6} \in \mathbb{Z}}. 
\end{align*} 
For $n \geq 1$, we define the $n \times n$ matrices, $A_n$ and $A_n^{-1}$, 
in terms of these sequences as follows: 
\begin{equation} 
\label{eqn_AnAnInv_MatixDefs} 
A_n := (a_{i,j})_{1 \leq i,j \leq n},\ A_n^{-1} := \left(a_{i,j}^{(-1)}\right)_{1 \leq i,j \leq n}. 
\end{equation} 
The matrices, $A_n$ and $A_n^{-1}$, are independent of the choice of the 
function $f$ for all $n$ and are each invertible, lower triangular square matrices with 
ones on their diagonals. The independence of these matrices on the choice of $f$ 
implicit to the expansions in \eqref{eqn_LambertSeriesfb_def} 
leads to the Lambert series matrix factorization result phrased by 
Theorem \ref{prop_MatrixEquations_fi_eq_AinvBbm} below. 
\end{definition} 

\noindent 
The motivation for defining the sequences, $a_f(n)$ and $a_{n,i}$, in 
Definition \ref{def_afn_ani_matrix_seqs} is to provide a compact notation for 
expressing the left-hand-side terms, $A_n [f(1)\ \cdots\ f(n)]^{T}$, of the 
matrix equation corresponding to the non-homogeneous recurrence relations for 
$g_f(n)$ given in Theorem \ref{prop_MatrixEquations_fi_eq_AinvBbm} and in 
Theorem \ref{prop_bn_recs} 
(see also Lemma \ref{lemma_LS_partial_sum_exps} on page 
\pageref{lemma_LS_partial_sum_exps}). 
Particular examples of the sequences, $a_f(n)$, include the next 
special case for the generalized sum of divisors functions, 
$f(n) := \sigma_{\alpha}(n) = \sum_{d|n} d^{\alpha}$, 
for fixed $\alpha \in \mathbb{C}$ in 
\eqref{eqn_afn_special_cases}. The expansion in \eqref{eqn_afn_special_cases} below 
provides a listing of the terms, $a_f(n)$, in the previous definition corresponding to 
the Lambert series pair $(f(n), g_f(n)) := (n^{\alpha}, \sigma_{\alpha}(n))$ in 
\eqref{eqn_LambertSeriesfb_def} which is intended for comparison with the 
closely-related matrix factorization result for this special case given by 
Theorem \ref{prop_MatrixEquations_fi_eq_AinvBbm}. 
\begin{align} 
\label{eqn_afn_special_cases} 
\sum_{m \geq 0} a_{n^\alpha}(m) q^m & = 1 + q + 2^{\alpha} q^2 + 
     \left(-1-2^{\alpha}+3^{\alpha}\right)q^3 + 
     \left(-1-3^{\alpha}+4^{\alpha}\right)q^4 \\ 
\notag 
     & \phantom{=q\ } + 
     \left(-1-2^{\alpha}-3^{\alpha}-4^{\alpha}+5^{\alpha}\right)q^5 + 
     \left(3^{\alpha}-4^{\alpha}-5^{\alpha}+6^{\alpha}\right) q^6 \\ 
\notag 
     & \phantom{=q\ } + 
     \left(-3^{\alpha}-5^{\alpha}-6^{\alpha}+7^{\alpha}\right) q^7 + \cdots. 
\end{align} 
The first few cases of the matrices, $A_n \in \mathbb{Z}_{n\times n}$, and their 
corresponding inverses, $A_n^{-1} \in \mathbb{N}_{n\times n}$, are shown in 
Table \ref{table_matrices_An}. 
In general, we see that for all $n \geq 2$, we have that 
\begin{align*} 
A_{n+1}^{-1} & = 
     \left[
     \begin{array}{c|c} 
     A_n^{-1} & \mathbf{0} \\ \hline 
     r_{n+1,n}, \ldots, r_{n+1, 1} & 1 
     \end{array}
     \right], 
\end{align*} 
where the first several special cases of the sequences, 
$\{r_{n+1,n}, r_{n+1,n-1}, \ldots, r_{n+1,1}\}$, are given in 
Table \ref{table_matrix_An_last_row_seqs}. 
The statement of the next theorem employs these sequences and 
matrix forms. 
The proof of Theorem \ref{prop_MatrixEquations_fi_eq_AinvBbm} 
is given in Section \ref{Section_Proofs} below. 

\begin{remark}[Short Author's Note] 
\label{remark_authors_note_on_AnAnInvEntryForms} 
Since the first submission of the manuscript 
the entries, $a_{n,i}$ and $a_{n,i}^{(-1)}$, in \eqref{eqn_AnAnInv_MatixDefs} 
have been determined in closed-form through joint work by Merca and Schmidt on 
Lambert series factorizations (2017). 
In particular, we have that $a_{n,i}$ corresponds to\footnote{ 
     \underline{Notation:} The bracket notation for coefficient extraction of a 
     (formal) power series is defined to be $[q^n] F(q) := f_n$ when 
     $F(q) := \sum_n f_n q^n$ represents the ordinary generating function of the 
     sequence, $\langle f_n \rangle_{n \geq 0}$. This notation is also employed in 
     Section \ref{Section_Proofs} below. 
} 
\[
s_o(n, i) - s_e(n, i) = [q^n] \frac{q^i}{1-q^i} (q; q)_{\infty}, 
\]
where $s_o(n, k)$ and $s_e(n, k)$ are respectively the number of $k$'s in all 
partitions of $n$ into an odd (even) number of distinct parts. 
Similarly, we can derive an exact formula for the inverse matrix entries as 
\[
a_{n,i}^{(-1)} = \sum_{d|n} p(d-i) \mu(n/d), 
\]
where $p(n)$ denotes \emph{Euler's partition function} which is generated by the 
reciprocal of the $q$-Pochhammer symbol as 
$p(n) = [q^n] \QPochhammer{q}{q}{\infty}^{-1}$ for all $n \geq 0$. 
\end{remark} 

\begin{table}[ht!] 
\centering 
\begin{tabular}{||c||c|c||} \hline\hline
$n$ & $A_n$ & $A_n^{-1}$ \\ \hline 
1 & $[1]$ & $[1]$ \\ 
2 & $\begin{bmatrix} 1 & 0 \\ 0 & 1 \end{bmatrix}$ & 
    $\begin{bmatrix} 1 & 0 \\ 0 & 1 \end{bmatrix}$ \\ 
3 & $\begin{bmatrix} 1 & 0 & 0 \\ 0 & 1 & 0 \\ -1 & -1 & 1 \end{bmatrix}$ & 
    $\begin{bmatrix} 1 & 0 & 0 \\ 0 & 1 & 0 \\ 1 & 1 & 1 \end{bmatrix}$ \\ 
4 & $\begin{bmatrix} 1 & 0 & 0 & 0 \\ 0 & 1 & 0 & 0 \\ -1 & -1 & 1 & 0 \\ 
                     -1 & 0 & -1 & 1 \end{bmatrix}$ & 
    $\begin{bmatrix} 1 & 0 & 0 & 0 \\ 0 & 1 & 0 & 0 \\ 1 & 1 & 1 & 0 \\ 
                     2 & 1 & 1 & 1 \end{bmatrix}$ \\ 
5 & $\begin{bmatrix} 1 & 0 & 0 & 0 & 0 \\ 0 & 1 & 0 & 0 & 0 \\ 
                     -1 & -1 & 1 & 0 & 0 \\ -1 & 0 & -1 & 1 & 0 \\ 
                     -1 & -1 & -1 & -1 & 1 \end{bmatrix}$ & 
    $\begin{bmatrix} 1 & 0 & 0 & 0 & 0 \\ 0 & 1 & 0 & 0 & 0 \\ 
                     1 & 1 & 1 & 0 & 0 \\ 2 & 1 & 1 & 1 & 0 \\ 
                     4 & 3 & 2 & 1 & 1 \end{bmatrix}$ \\ 
\hline\hline
\end{tabular} 

\bigskip
\caption{The first few special cases of the matrices, $A_n$ and $A_n^{-1}$} 
\label{table_matrices_An} 

\end{table} 

\begin{table}[ht] 
\centering 
\begin{tabular}{||c||l||} \hline\hline
$n$ & $\{r_{n,n-1}, r_{n,n-2}, \ldots, r_{n,1}\}$ \\ \hline 
2 & $\{1\}$ \\ 
3 & $\{1, 1\}$ \\ 
4 & $\{2, 1, 1\}$ \\ 
5 & $\{4, 3, 2, 1\}$ \\ 
6 & $\{5, 3, 2, 2, 1\}$ \\ 
7 & $\{10, 7, 5, 3, 2, 1\}$ \\ 
8 & $\{12, 9, 6, 4, 3, 2, 1\}$ \\ 
9 & $\{20, 14, 10, 7, 5, 3, 2, 1\}$ \\ 
10 & $\{25, 18, 13, 10, 6, 5, 3, 2, 1\}$ \\ 
11 & $\{41, 30, 22, 15, 11, 7, 5, 3, 2, 1\}$ \\ 
12 & $\{47, 36, 26, 19, 14, 10, 7, 5, 3, 2, 1\}$ \\ 
\hline\hline
\end{tabular} 

\bigskip 
\caption{The bottom row sequences in the matrices, $A_n^{-1}$} 
\label{table_matrix_An_last_row_seqs} 

\end{table} 

\begin{theorem}[Matrix Factorization Equations for $f(n)$] 
\label{prop_MatrixEquations_fi_eq_AinvBbm} 
For all $n \geq 1$, we have the following matrix factorization equations exactly 
generating the arithmetic functions, $f(n)$, in the definition of 
\eqref{eqn_LambertSeriesfb_def}: 
\begin{align} 
\label{eqn_fn_matrix_eqn}
\begin{bmatrix} f(1) \\ f(2) \\ \vdots \\ f(n) \end{bmatrix} & = 
     A_n^{-1} \left( 
     \underset{:= (B_{g_f,m})}{\underbrace{
     g_f(m+1) - \sum_{b = \pm 1} \sum_{k=1}^{\lfloor \frac{\sqrt{24m+1}-b}{6} \rfloor} 
     (-1)^{k+1} g_f(m+1-k(3k+b)/2)}} 
     \right)_{0 \leq m < n}. 
\end{align} 
\end{theorem}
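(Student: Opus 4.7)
The plan is to prove the row-by-row identity
\[
\sum_{i=1}^{n} a_{n,i}\, f(i) \;=\; B_{g_f,\, n-1} \qquad (n \geq 1),
\]
from which the theorem follows at once by left-multiplying the column vector $(B_{g_f,m})_{0 \leq m < n}$ by $A_n^{-1}$ (which exists since $A_n$ is lower triangular with unit diagonal). Both sides will emerge as the coefficient of $q^n$ in the formal power series identity
\[
(q;q)_{\infty} \cdot L_f(q) \;=\; (q;q)_{\infty} \sum_{m \geq 1} g_f(m)\, q^m,
\]
obtained by multiplying \eqref{eqn_LambertSeriesfb_def} through by the infinite $q$-Pochhammer product.

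First I would compute $[q^n]$ of the right-hand side. Applying Euler's pentagonal number theorem in the form $(q;q)_{\infty} = 1 + \sum_{k \geq 1}(-1)^k\bigl(q^{k(3k-1)/2} + q^{k(3k+1)/2}\bigr)$ and taking the Cauchy product with $\sum_{m \geq 1} g_f(m) q^m$ produces
\[
g_f(n) + \sum_{b = \pm 1}\sum_{k \geq 1}(-1)^k\, g_f\!\bigl(n - \tfrac{k(3k+b)}{2}\bigr),
\]
with $g_f(j)=0$ for $j \leq 0$. Converting the condition $n - k(3k+b)/2 \geq 1$ into the floor bound $k \leq \lfloor (\sqrt{24(n-1)+1}-b)/6 \rfloor$ as in footnote \ref{footnote_expl_of_floored_terms}, and rewriting $(-1)^k = -(-1)^{k+1}$, recognizes this coefficient as exactly $B_{g_f,\,n-1}$.

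For the left-hand side, I would expand $q^i/(1-q^i) = \sum_{s \geq 0} q^{(s+1)i}$, multiply by Euler's expansion of $(q;q)_{\infty}$, and read off
\[
[q^n]\, \frac{q^i (q;q)_{\infty}}{1-q^i} \;=\; \sum_{\substack{k,\,s \geq 0 \\ (s+1)i + k(3k \pm 1)/2 = n}} (-1)^k \;=\; a_{n,i},
\]
by Definition \ref{def_afn_ani_matrix_seqs}. Summing over $i \geq 1$ with weight $f(i)$ (the sum truncating at $i=n$ since $(s+1)i \leq n$ forces $i \leq n$) produces $\sum_{i=1}^{n} a_{n,i} f(i)$ on the Lambert series side. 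Matching the two coefficient expressions yields the row identity for every $n \geq 1$, and assembling these rows into matrix form gives the theorem.

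The chief nuisance will be bookkeeping around generalized pentagonal numbers: the case $k=0$ contributes a single term to $(q;q)_{\infty}$ rather than one per sign $b = \pm 1$, and must be separated before symmetrizing the remainder as a double sum over $k \geq 1$ and $b = \pm 1$; the floor upper bounds have to be translated carefully from the inequalities $0 \leq k(3k+b)/2 \leq n-1$; and the $(-1)^k$ supplied by Euler's theorem must be reconciled with the $(-1)^{k+1}$ that appears under the subtracted sum in the definition of $B_{g_f,m}$. Once this accounting is verified, invertibility of $A_n$ closes the argument.
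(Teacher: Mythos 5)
Your proposal is correct, but it reaches \eqref{eqn_fn_matrix_eqn} by a genuinely different route than the paper. The paper never multiplies the full Lambert series by the infinite product: it works with the $(m+1)^{st}$ \emph{partial sums} of $L_f(q)$, clears denominators with the finite product $\QPochhammer{q}{q}{m+1}$ (Lemma \ref{lemma_LS_partial_sum_exps}, including the truncation-error polynomials $\poly_{1,m}$, $\poly_{2,m}$), invokes the standard fact that a rational generating function with denominator $1-b_1q-\cdots-b_kq^k$ forces a constant-coefficient difference equation to obtain the recurrence of Theorem \ref{prop_bn_recs} with non-homogeneous term $a_f(n+1)$, and only then rearranges that recurrence into the system $A_n\,[f(1)\ \cdots\ f(n)]^{T}=(B_{g_f,m})$ and inverts $A_n$. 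You instead compare $[q^n]$ on both sides of $(q;q)_{\infty}\,L_f(q)=(q;q)_{\infty}\sum_{m\geq 1}g_f(m)q^m$ as formal power series: the right side gives $B_{g_f,n-1}$ via the pentagonal number theorem (your floor-bound and sign bookkeeping checks out, since $g_f$ vanishes at non-positive arguments), and the left side gives $\sum_{i=1}^n a_{n,i}f(i)$ once you identify $a_{n,i}=[q^n]\,q^i(q;q)_{\infty}/(1-q^i)$ --- which is precisely the closed form the paper only records after the fact in Remark \ref{remark_authors_note_on_AnAnInvEntryForms}, and is exactly where your flagged $k=0$ bookkeeping must be done against the paper's $\sum_{(k,s)}(-1)^k$ definition of $a_{n,i}$. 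What your route buys is economy and rigor of a purely formal kind: no partial-sum truncation, no error polynomials, no appeal to the rational-generating-function recurrence theorem, and it directly realizes the factorization $L_f(q)=\frac{1}{(q;q)_\infty}\sum_{n}\bigl(\sum_{k\leq n}s_{n,k}f(k)\bigr)q^n$ advertised in the introduction. What the paper's route buys is the intermediate Theorem \ref{prop_bn_recs} in its own right (your row identity is equivalent to it, but you would still need to state that equivalence explicitly to recover the recurrence), which the paper reuses for Corollary \ref{cor_Sigmabx_recs}. Both arguments close the same way, via $A_n$ being lower triangular with unit diagonal.
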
 

\subsubsection{Case 2} 

In the second case, we have recurrence relations for $g_f(n)$ in 
\eqref{eqn_LambertSeriesfb_def}, of the form stated in 
Theorem \ref{prop_bn_recs}. 
Moreover, if we define $\Sigma_{g_f,x} := \sum_{n \leq x} g_f(n)$ to be the 
average order of $g_f(n)$, then we can also 
prove easily by induction that $\Sigma_{g_f,x}$ itself also satisfies the 
related form of the recurrence relation given in 
Corollary \ref{cor_Sigmabx_recs}. 

\begin{theorem}[Recurrence Relations for $g_f(n)$] 
\label{prop_bn_recs} 
For all $n \geq 1$, we have the following recurrence relation for $g_f(n)$ 
expanded in terms of the sequences from 
Definition \ref{def_afn_ani_matrix_seqs}: 
\begin{align*} 
g_f(n+1) & = \sum_{b = \pm 1} \sum_{k=1}^{\lfloor \frac{\sqrt{24n+1}-b}{6} \rfloor} 
     (-1)^{k+1} g_f(n+1-k(3k+b) / 2) + a_f(n+1). 
\end{align*} 
\end{theorem}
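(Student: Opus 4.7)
\medskip

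The plan is to multiply the Lambert series identity $L_f(q) = \sum_{m \geq 1} g_f(m) q^m$ through by the $q$-Pochhammer symbol $(q;q)_{\infty}$ and compare coefficients of $q^{n+1}$ on both sides. The right-hand side, once $(q;q)_{\infty}$ is expanded via Euler's pentagonal number theorem, will produce the claimed $g_f$-recurrence, while the left-hand side will produce the inhomogeneous term $a_f(n+1)$.

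First I would record the Euler pentagonal expansion
\[
(q;q)_{\infty} = 1 + \sum_{k \geq 1} (-1)^k \left(q^{k(3k-1)/2} + q^{k(3k+1)/2}\right),
\]
and observe that the summand $q^{k(3k+b)/2}$ contributes to the coefficient of $q^{n+1}$ precisely when $k(3k+b)/2 \leq n+1$; since $g_f(0) = 0$ absorbs the boundary case $k(3k+b)/2 = n+1$, the effective upper bound on $k$ is $\lfloor (\sqrt{24n+1}-b)/6\rfloor$ (this is the footnoted inequality $0 \leq k(3k+b)/2 \leq n$). Multiplying $\sum_{m \geq 1} g_f(m) q^m$ by this series and reading off $[q^{n+1}]$ yields
\[
[q^{n+1}]\, L_f(q)(q;q)_{\infty} = g_f(n+1) + \sum_{b=\pm 1} \sum_{k=1}^{\lfloor(\sqrt{24n+1}-b)/6\rfloor} (-1)^k\, g_f\!\left(n+1 - \tfrac{k(3k+b)}{2}\right).
\]

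Next I would handle the left-hand side. Writing the Lambert series in geometric form, $\frac{q^i}{1-q^i} = \sum_{s \geq 0} q^{(s+1)i}$, and again invoking the pentagonal expansion,
\[
\frac{q^i}{1-q^i}(q;q)_{\infty} = \sum_{s \geq 0} \sum_{k,\,\pm} (-1)^k\, q^{(s+1)i + k(3k\pm 1)/2},
\]
so that $[q^{n+1}]\frac{q^i}{1-q^i}(q;q)_{\infty}$ is exactly the sum over pairs $(s,k)$ with $(s+1)i + k(3k\pm 1)/2 = n+1$ of $(-1)^k$, which by Definition \ref{def_afn_ani_matrix_seqs} is $a_{n+1,i}$. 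Summing against $f(i)$ and adding the Iverson term $[n+1=0]=0$ gives $[q^{n+1}]\, L_f(q)(q;q)_{\infty} = a_f(n+1)$.

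Equating the two expressions for $[q^{n+1}] L_f(q)(q;q)_{\infty}$ and moving the inner sum to the right (flipping the sign $(-1)^k$ to $(-1)^{k+1}$) produces the stated recurrence. The main obstacle is bookkeeping: one must verify that the upper limit $\lfloor(\sqrt{24n+1}-b)/6\rfloor$ correctly captures the set of generalized pentagonal numbers $k(3k+b)/2 \leq n$ on \emph{both} sides (so that the same index set appears both implicitly in $a_f(n+1)$ via $a_{n+1,i}$ and explicitly in the $g_f$-sum), and that the convergence/formal manipulations of the infinite products with the formal power series $L_f(q)$ are justified for $|q|<1$; the rest is a clean coefficient comparison.
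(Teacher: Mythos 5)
Your proof is correct, but it takes a different route from the paper. You work directly with the full Lambert series and the infinite product: you compute $[q^{n+1}]\,L_f(q)\,\QPochhammer{q}{q}{\infty}$ twice, once from $L_f(q)=\sum_m g_f(m)q^m$ (giving $g_f(n+1)$ plus the signed pentagonal-shift terms, with $g_f(j)=0$ for $j\le 0$ trimming the index set to $k(3k+b)/2\le n$) and once from $L_f(q)=\sum_i f(i)q^i/(1-q^i)$ (giving $\sum_i f(i)a_{n+1,i}=a_f(n+1)$ by the geometric-series expansion and Definition \ref{def_afn_ani_matrix_seqs}), and then equate. This is in effect a direct coefficient-level verification of the Lambert series factorization $L_f(q)\,\QPochhammer{q}{q}{\infty}=\sum_n a_f(n)q^n$ mentioned in the introduction, and it is clean and self-contained: since only finitely many $i$ and $k$ contribute to any fixed coefficient, the formal manipulations you flag are unproblematic. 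The paper instead routes the argument through Lemma \ref{lemma_LS_partial_sum_exps}: it writes $g_f(m+1)$ as a coefficient of the rational $(m+1)^{st}$ partial sum of $L_f(q)$, whose denominator is the truncated product $\QPochhammer{q}{q}{m+1}$ expanded by the pentagonal number theorem and whose numerator begins with the $a_f(i)$, and then invokes the standard fact that a rational ordinary generating function forces a constant-coefficient finite difference equation with inhomogeneous terms from the numerator. Your approach avoids the truncation and degree bookkeeping of that lemma (which the paper needs anyway for its matrix-factorization results), while the paper's approach keeps everything at the level of finite partial sums, which is what justifies reading the same identity row by row as the matrix equation $A_n[f(1),\dots,f(n)]^T=(B_{g_f,m})$ in Theorem \ref{prop_MatrixEquations_fi_eq_AinvBbm}. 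Both arguments rest on the same two ingredients, Euler's pentagonal number theorem and the definition of $a_f(n)$ and $a_{n,i}$, so the difference is one of packaging rather than substance.
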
 

\begin{cor}[Recurrence Relations for $\Sigma_{g_f,x}$] 
\label{cor_Sigmabx_recs} 
Let the $x^{th}$ partial sums of the function, $g_f(n)$, i.e., its average order, 
be defined by $\Sigma_{g_f,x} := \sum_{n \leq x} g_f(n)$. 
Then for all $n \geq 1$, we have that 
\begin{align} 
\label{eqn_Sigmabx_recs}
\Sigma_{g_f,n+1} & = \sum_{b = \pm 1} \left( 
     \sum_{k=1}^{\lfloor \frac{\sqrt{24n+1}-b}{6} \rfloor + 1} 
     (-1)^{k+1} \Sigma_{g_f,n+1-k(3k+b)/2} + 
     \sum_{k=1}^{n} a_f(k+1)\right). 
\end{align} 
\end{cor}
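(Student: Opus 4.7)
The plan is to derive the corollary by summing the single-term recurrence from Theorem \ref{prop_bn_recs} over the range $m = 0, 1, \ldots, n$ and interchanging the order of the resulting double sum, rather than via a raw induction on $n$. Writing $\Sigma_{g_f,n+1} = \sum_{m=0}^{n} g_f(m+1)$ and substituting the expression for $g_f(m+1)$ supplied by Theorem \ref{prop_bn_recs}, the $a_f(m+1)$ contributions immediately collect to $\sum_{k=1}^{n+1} a_f(k)$, matching (up to the precise range of the $a_f$-sum recorded in the statement) the second inner summand of \eqref{eqn_Sigmabx_recs}.

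For the pentagonal-number double sum over $b$ and $k$, I would swap the order of summation. As explained in footnote \ref{footnote_expl_of_floored_terms}, for each fixed $b \in \{\pm 1\}$ the inner constraint $k \leq \lfloor (\sqrt{24m+1}-b)/6 \rfloor$ is equivalent to $k(3k+b)/2 \leq m$. Interchanging the two sums therefore produces
\[
\sum_{k \geq 1} (-1)^{k+1} \sum_{m = k(3k+b)/2}^{n} g_f\!\left(m+1 - \frac{k(3k+b)}{2}\right),
\]
and the substitution $j = m+1 - k(3k+b)/2$ collapses the inner sum to $\sum_{j=1}^{n+1-k(3k+b)/2} g_f(j) = \Sigma_{g_f,\, n+1-k(3k+b)/2}$. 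The outer $k$-sum is supported precisely when $n+1 - k(3k+b)/2 \geq 1$, i.e.\ when $k \leq \lfloor (\sqrt{24n+1}-b)/6 \rfloor$; enlarging the upper cutoff by one as in \eqref{eqn_Sigmabx_recs} is harmless under the natural convention that $\Sigma_{g_f,x} = 0$ for $x \leq 0$. Finally, summing over $b = \pm 1$ reassembles exactly the double sum appearing in the right-hand side of \eqref{eqn_Sigmabx_recs}.

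The main obstacle is bookkeeping for the Fubini interchange, specifically tracking where the floor cutoff $\lfloor (\sqrt{24m+1}-b)/6 \rfloor$ is converted into the global cutoff $\lfloor (\sqrt{24n+1}-b)/6 \rfloor$ and verifying that the boundary contributions vanish under the $\Sigma_{g_f,0} = 0$ convention. A complementary, computationally equivalent route is a short induction on $n$: assume the identity at $n$, then use $\Sigma_{g_f,n+2} - \Sigma_{g_f,n+1} = g_f(n+2)$ together with a single application of Theorem \ref{prop_bn_recs} at index $n+1$, and observe that the only new terms in the $k$-sums correspond to the increment of $\lfloor (\sqrt{24n+1}-b)/6 \rfloor$ as $n$ advances. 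This inductive route is the one hinted at by the phrase \emph{``prove easily by induction''} in the text preceding the corollary; the summation argument above has the advantage of exhibiting directly where each summand in \eqref{eqn_Sigmabx_recs} originates.
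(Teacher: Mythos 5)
Your argument is sound, but it is not the route the paper takes: the paper proves the corollary by induction on $n$ (a direct check at $n=1$, then adding $g_f(j+2)$ to both sides and absorbing the increment via Theorem \ref{prop_bn_recs}), which is precisely the ``complementary route'' you only sketch at the end. Your primary derivation --- summing the recurrence of Theorem \ref{prop_bn_recs} over $m=0,\ldots,n$ and interchanging the $m$- and $k$-sums using the equivalence $k\le\lfloor(\sqrt{24m+1}-b)/6\rfloor\iff k(3k+b)/2\le m$ --- is a genuinely different and arguably cleaner proof: it exhibits where each $\Sigma_{g_f,\,n+1-k(3k+b)/2}$ comes from, makes the enlargement of the $k$-cutoff by one transparent (the extra term has index $\le 0$, hence vanishes under the convention $\Sigma_{g_f,x}=0$ for $x\le 0$), and avoids the floor-increment bookkeeping that the paper's induction must track (and tracks loosely, with sign slips between $(-1)^k$ and $(-1)^{k+1}$). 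One small point to add: Theorem \ref{prop_bn_recs} is stated only for $n\ge 1$, so the $m=0$ summand should be checked separately; it reads $g_f(1)=a_f(1)$, which holds since both pentagonal sums are empty and $a_f(1)=f(1)=g_f(1)$.

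The discrepancy you dismiss parenthetically deserves to be made explicit, because it is real. Your interchange yields
\begin{align*}
\Sigma_{g_f,n+1} &= \sum_{b=\pm 1}\ \sum_{k=1}^{\lfloor\frac{\sqrt{24n+1}-b}{6}\rfloor+1}(-1)^{k+1}\,\Sigma_{g_f,\,n+1-k(3k+b)/2} \;+\; \sum_{m=1}^{n+1} a_f(m),
\end{align*}
with the $a_f$-sum counted once and beginning at $a_f(1)$, whereas the display \eqref{eqn_Sigmabx_recs} places $\sum_{k=1}^{n}a_f(k+1)$ inside the parentheses of the $b$-sum (so it is counted twice) and omits $a_f(1)=f(1)$. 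These do not agree in general: for $(f,g_f)=(\phi(n),n)$ and $n=3$ the right-hand side of \eqref{eqn_Sigmabx_recs} evaluates to $9$ (and still to $9$ even if one reads the $a_f$-sum as outside the $b$-parentheses), while $\Sigma_{g_f,4}=10$, which is what your formula gives. So your derivation proves the corrected form of the corollary --- the form the paper's own induction step implicitly uses, since there the $a_f$-sum is written outside the $b$-sum --- and you should state that correction rather than describing your $a_f$-range as matching the printed one.
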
 

\noindent 
Each of Theorem \ref{prop_bn_recs} and Corollary \ref{cor_Sigmabx_recs} 
are proved in Section \ref{Section_Proofs}. 

\subsubsection{Algorithms for computing the functions, $f(n)$ and $g_f(n)$, in polynomial time} 

Since the determinant of a $(n-1) \times (n-1)$ matrix can be computed in 
$O(n^3)$ time, if $g_f(n)$ can be computed in constant time, then by the 
theorem, we have a $O(n^4)$ polynomial time algorithm to compute any 
function $f(n)$ in \eqref{eqn_LambertSeriesfb_def}. 
If we instead use Gaussian elimination with back substitution, we can 
compute the functions, $f(n)$, in $O(n^3)$ time. 
Similarly, if $g_f(n)$ can be computed in $O(h_{g_f}(n))$ time, 
then we can compute any 
function $f(n)$ in \eqref{eqn_LambertSeriesfb_def} in 
$O(h_{g_f}(n) \sqrt{n} + n^4)$ / $O(h_{g_f}(n) \sqrt{n} + n^3)$ time. 

However, we note that each of the special arithmetic functions on the 
left-hand-side of \eqref{eqn_WellKnown_LamberSeries_Examples} can be 
computed more efficiently using sieves and other prime factorization 
algorithms. 
Despite more efficient known methods for computing the classical arithmetic 
functions involved in the expansions of \eqref{eqn_WellKnown_LamberSeries_Examples}, 
this observation is still useful since it 
implies that there are now known polynomial-time algorithms for computing the 
pairs, $(f(n), g_f(n))$, in \emph{any} Lambert series expansion provided a 
polynomial-time method of computation for either one of the 
functions in the corresponding pair. 

\subsection{Organization of the article} 

The proofs of Theorem \ref{prop_MatrixEquations_fi_eq_AinvBbm}, 
Theorem \ref{prop_bn_recs}, and of Corollary \ref{cor_Sigmabx_recs} 
stated in the last subsection are given first in Section \ref{Section_Proofs}. 
In Section \ref{Section_Examples}, we provide several concrete examples of the 
applications of these new results to the classical arithmetic functions of the 
sum of divisors function, $\sigma_1(n)$, the Euler phi function, $\phi(n)$, the 
M\"obius function, $\mu(n)$, and to the Liouville lambda function, $\lambda(n)$. 
In the concluding remarks we give in Section \ref{Section_Concl}, we suggest an 
approach to analogs of the new results proved within the article for 
a generalized class of Lambert series expansions, 
$L_{f}(\alpha, \beta; a, b, c, d; q)$, 
which are suggested as a future avenue of research on this topic. 

\section{Proofs of the theorems} 
\label{Section_Proofs} 

In order to obtain recurrence relations between the sequences implicit to the 
definition of \eqref{eqn_LambertSeriesfb_def}, we first observe that for all 
$m \geq 0$ we have the next series expansions of the partial sums of the 
Lambert series, $L_f(q)$, 
where $\QPochhammer{q}{q}{n} = (1-q)(1-q^2) \cdots (1-q^{n})$ denotes the 
\emph{$q$-Pochhammer symbol} \citep[\S 17.2]{NISTHB}, and where the functions 
$\poly_{i,m}(f; q)$ denote polynomials in $q$ with coefficients depending on $f$ 
for $i = 1,2$ and whose degree is linear in the fixed index $m$. 

\begin{lemma}[Partial Sums of the Lambert Series, $L_f(q)$] 
\label{lemma_LS_partial_sum_exps} 
For a fixed pair of functions $(f(n), g_f(n))$ in the expansions of 
\eqref{eqn_LambertSeriesfb_def} and for all integers $m \geq 0$ we 
have that 
\begin{subequations} 
\begin{align} 
\label{eqn_bmp1_coeff_exp_va} 
g_f(m+1) & = [q^m]\left(\frac{1}{q} \times \sum_{n=1}^{m+1} 
     \frac{f(n) q^n}{1-q^n}\right) \\ 
\label{eqn_bmp1_coeff_exp_vb} 
     & = 
     [q^m]\left(\frac{\frac{1}{q} \cdot \QPochhammer{q}{q}{m+1} 
     \left[ \frac{f(1) \cdot q}{1-q} + 
     \frac{f(2) \cdot q^2}{1-q^2} + \cdots + \frac{f(n) \cdot q^{m+1}}{1-q^{m+1}}\right]}{ 
     (1-q)(1-q^2) \cdots (1-q^{m+1})}\right) \\ 
\label{eqn_bmp1_coeff_exp_vc} 
     & = 
     [q^m]\left(\frac{\sum_{1 \leq i \leq m+1} a_f(i) q^i + 
     q^{m+2} \cdot \poly_{1,m}(f; q)}{ 
     1 + \sum_{b=\pm 1} \sum_{k=1}^{\lfloor \frac{\sqrt{24m+25}+1}{6} \rfloor} 
     (-1)^k q^{k(3k+b)/2} + 
     q^{m+2} \cdot \poly_{2,m}(f; q)}\right). 
\end{align} 
\end{subequations} 
\end{lemma}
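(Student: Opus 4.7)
The plan is to prove (a), (b), and (c) in sequence, working throughout in the ring $\mathbb{C}[[q]]$ of formal power series, with each equality being a formal-series rewriting of the preceding one. For (a) my approach is to start from the defining expansion $L_f(q) = \sum_{n \geq 1} g_f(n) q^n$, so that $g_f(m+1) = [q^{m+1}] L_f(q) = [q^m] \bigl( L_f(q)/q \bigr)$. Each summand $\frac{f(n) q^n}{1-q^n} = \sum_{j \geq 1} f(n) q^{jn}$ contributes only to coefficients $[q^k]$ with $k \geq n$, so the terms indexed by $n > m+1$ cannot influence $[q^{m+1}]$; truncating the Lambert series at $n = m+1$ then gives (a). For (b) I would multiply numerator and denominator of the right-hand side of (a) by the $q$-Pochhammer product $\QPochhammer{q}{q}{m+1}$, which has constant term $1$ and is therefore a unit in $\mathbb{C}[[q]]$; distributing it over the sum in the numerator yields (b) directly (reading the symbol $f(n)$ that appears in the last displayed summand of (b) as $f(m+1)$).

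The substantive content is in passing from (b) to (c), where I would invoke Euler's pentagonal number theorem
\[
\QPochhammer{q}{q}{\infty} = 1 + \sum_{b = \pm 1} \sum_{k \geq 1} (-1)^k q^{k(3k+b)/2},
\]
together with the factorization $\QPochhammer{q}{q}{m+1} = \QPochhammer{q}{q}{\infty} \cdot \prod_{n \geq m+2}(1-q^n)^{-1}$ and the observation that $\prod_{n \geq m+2}(1-q^n)^{-1} \equiv 1 \pmod{q^{m+2}}$. Collecting the pentagonal exponents satisfying $k(3k+b)/2 \leq m+1$ (whose upper bound $\lfloor (\sqrt{24m+25}-b)/6 \rfloor$ comes from solving that inequality for $k$) supplies the explicit low-degree terms of the denominator of (c), with all higher-degree corrections absorbed into $q^{m+2} \cdot \poly_{2,m}(f;q)$. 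For the numerator, I would use the identity $a_f(n) = [q^n]\bigl(\QPochhammer{q}{q}{\infty} \cdot L_f(q)\bigr)$ for $n \geq 1$, which is immediate from Definition \ref{def_afn_ani_matrix_seqs} and is the same as the compact form $a_{n,i} = [q^n] \frac{q^i}{1-q^i} \QPochhammer{q}{q}{\infty}$ recorded in Remark \ref{remark_authors_note_on_AnAnInvEntryForms}. Since both $\QPochhammer{q}{q}{m+1} - \QPochhammer{q}{q}{\infty}$ and $\sum_{n \geq m+2} \frac{f(n) q^n}{1-q^n}$ are divisible by $q^{m+2}$ in $\mathbb{C}[[q]]$, the numerator of (b) agrees with $\sum_{i \geq 1} a_f(i) q^i$ modulo $q^{m+2}$, giving the form $\sum_{i=1}^{m+1} a_f(i) q^i + q^{m+2} \cdot \poly_{1,m}(f;q)$ stated in (c).

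The main obstacle, such as it is, lies in the careful bookkeeping of truncation errors on both top and bottom: one must verify that every discrepancy between $\QPochhammer{q}{q}{m+1}$ and $\QPochhammer{q}{q}{\infty}$, and between the partial Lambert sum and the full series $L_f(q)$, can be cleanly absorbed into the single tail terms $q^{m+2} \cdot \poly_{i,m}(f;q)$ appearing in (c), and that the degree of each $\poly_{i,m}(f;q)$ is in fact linear in $m$ as claimed. Once this verification is in place the remaining manipulations are routine formal-series arithmetic and the chain (a) $\Rightarrow$ (b) $\Rightarrow$ (c) is complete.
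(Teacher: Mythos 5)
Your proposal is correct and follows essentially the same route as the paper: truncation of the Lambert series for \eqref{eqn_bmp1_coeff_exp_va}, clearing to the common denominator $\QPochhammer{q}{q}{m+1}$ for \eqref{eqn_bmp1_coeff_exp_vb}, and Euler's pentagonal number theorem for the denominator together with the definition of $a_f(n)$ (equivalently $a_f(n) = [q^n]\,\QPochhammer{q}{q}{\infty} L_f(q)$) for the numerator of \eqref{eqn_bmp1_coeff_exp_vc}, with all higher-order discrepancies pushed into the $q^{m+2}\cdot\poly_{i,m}(f;q)$ tails. Your explicit congruence-mod-$q^{m+2}$ bookkeeping via $\QPochhammer{q}{q}{m+1} = \QPochhammer{q}{q}{\infty}\prod_{n \geq m+2}(1-q^n)^{-1}$ is just a cleaner packaging of the same argument the paper gives.
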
 
\begin{proof} 
To justify \eqref{eqn_bmp1_coeff_exp_va}, we observe that for all integers 
$m \geq 1$ and $1 \leq i \leq m$, we have that 
\begin{equation*} 
[q^i] \left(L_f(q) - \sum_{n > m} \frac{f(n) q^n}{1-q^n}\right) = 0, 
\end{equation*} 
i.e., that the $m^{th}$ partial sums of $L_f(q)$ accurately generate $f(k)$ for 
$1 \leq k \leq m$, 
which is easy enough to see by considering the numerator multiples, $q^n$, of the 
geometric series, $(1-q^n)^{-1}$, in the individual Lambert series terms from 
\eqref{eqn_LambertSeriesfb_def}. 
The result in \eqref{eqn_bmp1_coeff_exp_vb} follows immediately from 
\eqref{eqn_bmp1_coeff_exp_va} by combining the terms in the first partial sum, and 
implies the third result in \eqref{eqn_bmp1_coeff_exp_vc} in two key ways. 

First, the respective form of the denominator terms in 
\eqref{eqn_bmp1_coeff_exp_vc} follows from the statement of 
\emph{Euler's pentagonal number theorem}, which states that 
\citep[\S 19.9, Thm.\ 353]{HARDYANDWRIGHT} 
\begin{align*} 
\QPochhammer{q}{q}{\infty} & = \sum_{n=-\infty}^{\infty} (-1)^n q^{n(3n+1)/2} = 
     1 + \sum_{n \geq 1} (-1)^n \left(q^{k(3k-1)/2} + q^{k(3k+1)/2}\right) \\ 
     & = 
     1 - q - q^2 + q^5+q^7-q^{12}-q^{15}+q^{22}+q^{26} - \cdots. 
\end{align*} 
In particular, we see that the pentagonal number theorem shows that 
\begin{equation*} 
[q^i] (1-q)(1-q^2)\cdots(1-q^n) = 
     \begin{cases} 
     (-1)^k, & \text{ if $i = \frac{k(3k\pm 1)}{2}$; } \\ 
     0, & \text{otherwise, } 
     \end{cases} 
\end{equation*} 
for all $i \leq n$ by a contradiction argument. 
Since $(1-q^i)$ is a factor of $\QPochhammer{q}{q}{n}$ for all $1 \leq i \leq n$, 
we see that both of the numerator and denominator of 
\eqref{eqn_bmp1_coeff_exp_vb} are polynomials in $q$, 
each with degree greater than $m+1$. 
This implies the correctness of the 
denominator polynomial form stated in \eqref{eqn_bmp1_coeff_exp_vc}. 

Secondly, since the geometric series in $q^i$ is expanded by 
\begin{equation*} 
\frac{1}{1-q^i} = \sum_{s \geq 0} q^{si}, 
\end{equation*} 
for each finite $i \geq 1$, we have by the definition of $a_f(n)$ in 
Definition \ref{def_afn_ani_matrix_seqs} that the first $m+1$ terms of the 
numerator expansion in \eqref{eqn_bmp1_coeff_exp_vc} are correct. 
Thus since the numerator in \eqref{eqn_bmp1_coeff_exp_vc} is polynomial in $q$, 
it is also correct in form. 
\end{proof} 

\begin{proof}[Proof of Theorem \ref{prop_bn_recs}]
We use \eqref{eqn_bmp1_coeff_exp_vc} in Lemma \ref{lemma_LS_partial_sum_exps} 
to prove our result. 
If we let $\Num_m(q)$ and $\Denom_m(q)$ denote the numerator and denominator 
polynomials in \eqref{eqn_bmp1_coeff_exp_vc}, respectively, we see that by 
definition, $\deg_q \left\{\Num_m(q)\right\} < \deg_q \left\{ \Denom_m(q) \right\}$. 
For any sequence, $(f_n)_{n \geq 0}$, generated by a rational generating function 
of the form 
\begin{align*} 
\sum_{n \geq 0} f_n q^n & = \frac{a_0+a_1q+a_2q^2+\cdots+a_{k-1}q^{k-1}}{ 
     1 - b_1 q - b_2 q^2 - \cdots - b_k q^{k}}, 
\end{align*} 
for some fixed finite integer $k \geq 1$, we can prove that $f_n$ satisfies 
at most a  
$k$-order finite difference equation with constant coefficients of the form 
\cite[\S 2.3]{GFLECT} 
\begin{align*} 
f_n & = \sum_{i=1}^{\min(k, n)} b_i f_{n-i} + a_n \Iverson{0 \leq n < k}. 
\end{align*} 
Then since we define $f(n) = 0$ for all $n < 1$ in 
\eqref{eqn_LambertSeriesfb_def}, and 
since the $m^{th}$ partial sums of $L_f(q)$ generate $g_f(i)$ for all 
$1 \leq i \leq m$ by the lemma, and since $g_f(i) = 0$ for all $i < 1$, 
we see that \eqref{eqn_bmp1_coeff_exp_vc} implies our result. 
\end{proof} 

\begin{proof}[Proof of Theorem \ref{prop_MatrixEquations_fi_eq_AinvBbm}] 
The theorem is a consequence of 
Definition \ref{def_afn_ani_matrix_seqs} applied to 
Theorem \ref{prop_bn_recs}. 
Specifically, by rearranging terms in the result from the previous theorem, 
we see that 
\begin{align*} 
\tag{i} 
A_n \begin{bmatrix} f(1) \\ f(2) \\ \vdots \\ f(n) \end{bmatrix} & = 
     \begin{bmatrix} B_{g_f,0} \\ B_{g_f,1} \\ \vdots \\ B_{g_f,n-1} \end{bmatrix}, 
\end{align*} 
where \eqref{eqn_fn_matrix_eqn} defines the sequence of $B_{g_f,m}$. 
Then by the definition of $a_{n,i}$ given in 
Definition \ref{def_afn_ani_matrix_seqs} 
(\cf Remark \ref{remark_authors_note_on_AnAnInvEntryForms}), 
it is easy to see that 
$A_n$ is lower triangular with ones on its diagonals, and so is 
invertible for all $n \geq 1$. 
Thus by applying $A_n^{-1}$ to both sides of (i), we have proved 
\eqref{eqn_fn_matrix_eqn} in the statement of the theorem. 
\end{proof} 

\begin{proof}[Proof of Corollary \ref{cor_Sigmabx_recs}]
We can show directly by computation that the statement is true for $n = 1$. 
For some $j \geq 1$, suppose that the hypothesis in \eqref{eqn_Sigmabx_recs} is 
true for $n = j$. Then wee see that 
\begin{align*} 
\widetilde{\Sigma}_{g_f,j+1} & = 
     \sum_{b = \pm 1}  
     \sum_{k=1}^{\left\lfloor \frac{\sqrt{24j+25}-b}{6} \right\rfloor}
     (-1)^{k} \left[\Sigma_{g_f,j+1-k(3k+b)/2} + g_f(j+2-k(3k+b)/2)\right] \\ 
     & \phantom{=\sum1\ } + 
     \sum_{k=1}^{j+1} a_f(k+1) \\ 
     & = 
     \Sigma_{g_f,j+1} + \sum_{b=\pm 1} 
     \sum_{k=1}^{\left\lfloor \frac{\sqrt{24j+25}-b}{6} \right\rfloor} 
     g_f(j+2-k(3k+b)/2) + a_f(j+2),\text{by hypothesis} \\ 
     & = 
     \Sigma_{g_f,j+1} + g_f(j+2) \\ 
     & = 
     \Sigma_{g_f,j+2}. 
\end{align*} 
The second to last of the previous equations follows from 
Theorem \ref{prop_bn_recs}, the fact that 
$\lfloor (\sqrt{24n+25}-b)/6 \rfloor \geq \lfloor (\sqrt{24n+1}-b)/6 \rfloor$, and 
since $g_f(i) = 0$ for all $i < 1$. 
\end{proof} 

\section{Examples of the new results} 
\label{Section_Examples} 

\subsection{The generalized sum-of-divisors functions} 

For any $n, x \geq 0$, we have the following recurrence relations 
following from the results proved in Theorem \ref{prop_bn_recs} and 
Corollary \ref{cor_Sigmabx_recs}\footnote{ 
     \label{footnote_pentagonal_number_sets} 
     Here, we make use of a natural conjecture from 
     \eqref{eqn_afn_special_cases} that 
     $a_n(m) = (-1)^k (m+1) \Iverson{m = k(3k\pm 1) / 2}$, where the 
     \emph{pentagonal numbers}, $\omega_{k,b} := \frac{k(3k+b)}{2}$, between 
     $1$ and $n+1$ are given by the following sets for each respective $b = \pm 1$ 
     (see footnote \ref{footnote_expl_of_floored_terms} on page 
     \pageref{footnote_expl_of_floored_terms}): 
     \begin{equation*} 
     \left\{\omega_{1,b}, \omega_{2,b}, \ldots, 
          \omega_{\left\lfloor (\sqrt{24n+1}-b) / 6 \right\rfloor,b} 
          \right\}. 
     \end{equation*} 
}: 
\begin{align*} 
\sigma_1(n+1) & = \sum_{b = \pm 1}  
     \sum_{k=1}^{\lfloor \frac{\sqrt{24m+1}-b}{6} \rfloor} 
     (-1)^{k+1} \sigma_1(n+1-k(3k+b)/2) \\ 
     & \phantom{=\sum1\ } + 
     (-1)^k (n+1) \Iverson{n+1 = k(3k\pm 1)/2} \\ 
\Sigma_{n,x+1} & = \sum_{b = \pm 1} \left( 
     \sum_{k=1}^{\lfloor \frac{\sqrt{24x+1}-b}{6} \rfloor + 1} 
     (-1)^{k+1} \Sigma_{n,x+1-k(3k+b)/2} + 
     \sum_{k=1}^{\lfloor \frac{\sqrt{24x+25}-b}{6} \rfloor} 
     (-1)^{k+1} \frac{k(3k+b)}{2} 
     \right). 
\end{align*} 
Notice that the previous two equations imply exact closed-form formulas for 
$\sigma_1(m)$ and $\Sigma_{n,m}$ at each $m \geq 1$, and similarly for 
fixed $m$ and all $1 \leq n \leq m$. 
Moreover, since we conjecture that the zeros of the polynomials, 
$\widetilde{Q}_n(q) := q^n \cdot Q_n(1/q)$, or alternately the reciprocal zeros 
of the polynomials 
\begin{align} 
\label{eqn_PolyQnq_ZerosRemark_formula}
Q_n(q) & := 1 - \sum_{b = \pm 1} \sum_{k=1}^{\lfloor \frac{\sqrt{24n+1}-b}{6} \rfloor} 
     (-1)^k q^{k(3k+b)/2}, 
\end{align} 
have maximum magnitude of a little over $1$ (depending on $n$), 
we remark as another potential application, 
which is suggested for further work based on the results in this article, that it may be 
possible to use these results to obtain better error bounds on the known average order sums 
\citep[\S 27.11]{NISTHB} 
\begin{align*} 
\sum_{n \leq x} \sigma_1(n) & = \frac{\pi^2}{12} x^2 + O(x \log x) \\ 
\sum_{n \leq x} \sigma_{\alpha}(n) & = \frac{\zeta(\alpha+1)}{\alpha + 1} 
     x^{\alpha+1}  + O(x^{\beta}),\ \alpha > 0, \alpha \neq 1, 
     \beta = \max(1, \alpha), 
\end{align*} 
using the explicit sequence formulas in \eqref{eqn_afn_special_cases}. 

\subsection{Euler's totient function} 

We provide a computation of \eqref{eqn_fn_matrix_eqn} to demonstrate the 
utility to our method: 
\begin{align*} 
\begin{bmatrix} \phi(1) \\ \phi(2) \\ \phi(3) \\ \phi(4) \\ \phi(5) \\ 
                \phi(6) \\ \phi(7) \end{bmatrix} & = 
     \begin{bmatrix} 1 & 0 & 0 & 0 & 0 & 0 & 0 \\ 
                     0 & 1 & 0 & 0 & 0 & 0 & 0 \\ 
                     1 & 1 & 1 & 0 & 0 & 0 & 0 \\ 
                     2 & 1 & 1 & 1 & 0 & 0 & 0 \\ 
                     4 & 3 & 2 & 1 & 1 & 0 & 0 \\ 
                     5 & 3 & 2 & 2 & 1 & 1 & 0 \\ 
                     10 & 7 & 5 & 3 & 2 & 1 & 1 
     \end{bmatrix} 
     \begin{bmatrix} 1 \\ 1 \\ 0 \\ -1 \\ -2 \\ -2 \\ -2 \end{bmatrix} = 
     \begin{bmatrix} 1 \\ 1 \\ 2 \\ 2 \\ 4 \\ 2 \\ 6 \end{bmatrix}. 
\end{align*} 
In this case, we can solve for the right-hand-side vector, $(B_{g_f,n})$, 
explicitly. More precisely, when $(f, g_f) := (\phi(n), n)$ from 
\eqref{eqn_WellKnown_LamberSeries_Examples}, 
we see by straightforward summation that 
\begin{align*} 
B_{n,m} & = m+1 - \frac{1}{8}\Biggl(8 - 5 \cdot (-1)^{u_1} - 4 \left( 
     -2 + (-1)^{u_1} + (-1)^{u_2}\right) m + 
     2 (-1)^{u_1} u_1 (3u_1+2) \\ 
     & \phantom{=m+1 - \frac{1}{8}\Biggl(8\ } + 
     (-1)^{u_2} (6u_2^2+8u_2-3)\Biggr),  
\end{align*} 
where $u_1 \equiv u_1(m) := \lfloor (\sqrt{24m+1}+1)/6 \rfloor$ and 
$u_2 \equiv u_2(m) := \lfloor (\sqrt{24m+1}-1)/6 \rfloor$. 
The first terms of the sequence, $\{B_{n,m}\}_{m \geq 0}$, corresponding to the 
Lambert series over Euler's phi function are given by 
\[
\{B_{n,m}\}_{m \geq 0} = \{1, 1, 0, -1, -2, -2, -2, -1,0,1,2,3,3,3,3,2,1,0,-1,-2,-3, \ldots\}. 
\]

\subsection{The M\"obius function} 

We similarly provide a computation of \eqref{eqn_fn_matrix_eqn} to demonstrate the 
utility to our method in this special case: 
\begin{align*} 
\begin{bmatrix} \mu(1) \\ \mu(2) \\ \mu(3) \\ \mu(4) \\ \mu(5) \\ \mu(6) \\ \mu(7) \end{bmatrix} & = 
     \begin{bmatrix} 1 & 0 & 0 & 0 & 0 & 0 & 0 \\ 
                     0 & 1 & 0 & 0 & 0 & 0 & 0 \\ 
                     1 & 1 & 1 & 0 & 0 & 0 & 0 \\ 
                     2 & 1 & 1 & 1 & 0 & 0 & 0 \\ 
                     4 & 3 & 2 & 1 & 1 & 0 & 0 \\ 
                     5 & 3 & 2 & 2 & 1 & 1 & 0 \\ 
                     10 & 7 & 5 & 3 & 2 & 1 & 1 
     \end{bmatrix} 
     \begin{bmatrix} 1 \\ -1 \\ -1 \\ 0 \\ 0 \\ 1 \\ 0 \end{bmatrix} = 
     \begin{bmatrix} 1 \\ -1 \\ -1 \\ 0 \\ -1 \\ 1 \\ -1 \end{bmatrix}. 
\end{align*} 
In this case the vector, $(B_{g_f,m})$, is given by the formula 
\begin{align*} 
B_{\Iverson{n = 1}, m} & = \Iverson{m = 0} + \sum_{b = \pm 1} 
     \sum_{k=1}^{\lfloor \frac{\sqrt{24m+1}-b}{6} \rfloor} 
     (-1)^k \Iverson{m+1-k(3k+b)/2 = 1}. 
\end{align*} 
The first terms of the sequence, $\{B_{g_f,m}\}_{m \geq 0}$, for the 
Lambert series over the M\"obius function are given by 
\[
\left\{B_{\Iverson{n = 1}, m}\right\}_{m \geq 0} = 
     \{1, -1, -1, 0, 0, 1, 0, 0, 1, 0, 0, 0, 0, -1, 0, 0, -1, 0, 0, 0, 0, \ldots\}. 
\]

\subsection{Liouville's lambda function} 

Finally, we provide a computation of \eqref{eqn_fn_matrix_eqn} to demonstrate the 
utility to our method in the case of the Liouville lambda function: 
\begin{align*} 
\begin{bmatrix} \lambda(1) \\ \lambda(2) \\ \lambda(3) \\ \lambda(4) \\ \lambda(5) \\ 
                \lambda(6) \\ \lambda(7) \end{bmatrix} & = 
     \begin{bmatrix} 1 & 0 & 0 & 0 & 0 & 0 & 0 \\ 
                     0 & 1 & 0 & 0 & 0 & 0 & 0 \\ 
                     1 & 1 & 1 & 0 & 0 & 0 & 0 \\ 
                     2 & 1 & 1 & 1 & 0 & 0 & 0 \\ 
                     4 & 3 & 2 & 1 & 1 & 0 & 0 \\ 
                     5 & 3 & 2 & 2 & 1 & 1 & 0 \\ 
                     10 & 7 & 5 & 3 & 2 & 1 & 1 
     \end{bmatrix} 
     \begin{bmatrix} 1 \\ -1 \\ -1 \\ 1 \\ -1 \\ 0 \\ 0 \end{bmatrix} = 
     \begin{bmatrix} 1 \\ -1 \\ -1 \\ 1 \\ -1 \\ 1 \\ -1 \end{bmatrix}. 
\end{align*} 
In this case the vector, $(B_{g_f,m})$, is given by the formula 
\begin{align*} 
B_{\Iverson{\text{$n$ is a positive square}}, m} & = 
     \Iverson{\sqrt{m+1} \in \mathbb{Z}} \\
     & \phantom{=1\ } - 
     \sum_{b = \pm 1} 
     \sum_{k=1}^{\lfloor \frac{\sqrt{24m+1}-b}{6} \rfloor} 
     (-1)^{k+1} \Iverson{\sqrt{m+1-k(3k+b)/2} \in \mathbb{Z}}. 
\end{align*} 
The first few terms of the sequence, $\{B_{g_f,m}\}_{m \geq 0}$, for the 
Lambert series over Liouville's function are given by 
\[
\left\{B_{\Iverson{\text{$n = k^2$}}, m}\right\}_{m \geq 0} = 
     \{1, -1, -1, 1, -1, 0, 0, 1,2,-1,0,0,-1,1,0,0,-1,-1,-1,0, \ldots\}. 
\]

\section{Conclusions} 
\label{Section_Concl} 

\subsection{Summary} 

We have given proofs of several new recurrence relations and matrix equations for the 
sequences, $f(n)$ and $g_f(n)$, implicit to the Lambert series expansions defined in 
\eqref{eqn_LambertSeriesfb_def} where one of $f(n)$ or $g_f(n)$ 
is typically an interesting arithmetic function we wish to study. 
The key ingredients to the proofs of these results are the definitions of the 
matrices, $A_n$, in Definition \ref{def_afn_ani_matrix_seqs}, and 
Euler's pentagonal theorem applied to the 
partial sums of the left-hand-side series for $L_f(q)$ defined by 
\eqref{eqn_LambertSeriesfb_def}. 

The special case examples from \eqref{eqn_WellKnown_LamberSeries_Examples} 
for $\sigma_1(n)$, $\phi(n)$, $\mu(n)$, and $\lambda(n)$ 
given in Section \ref{Section_Examples} are easily extended to form related 
results for Lambert series of other special functions, such as those given for the 
logarithmic derivatives of the \emph{Jacobi theta functions}, 
$\vartheta_i(z, q)$, cited in the reference \citep[\S 20.5(ii)]{NISTHB}. 
There are also well-known Lambert series expansions involving 
von Mangoldt's function, $\Lambda(n)$, $|\mu(n)|$, the number of distinct 
primes dividing $n$, $\omega(n)$, and Jordan's totient function, $J_t(n)$, 
which provide still other applications of our new results. 
To the best of our knowledge these results, and certainly the interpretations of 
their proofs, are new in the literature. 

\subsection{Generalizations} 

We can generalize these results to form analogous matrix equations and 
new recurrence relations for the sequences, $\widetilde{f}(n)$ and 
$\widetilde{g}(n)$, implicit to the expansions of generalized Lambert series 
of the form 
\begin{align*} 
L_{\widetilde{f}}(\alpha, \beta; a, b, c, d; q) & := \sum_{n \geq 1} 
     \frac{\widetilde{f}(n) \alpha^n q^{d(an+b)}}{1-\beta q^{c(an+b)}} = 
     \sum_{m \geq 1} \widetilde{g}(m) q^m, 
\end{align*} 
for some $\alpha, \beta, a, b, c, d \in \mathbb{C}$ with 
$\alpha, \beta, a, c, d \neq 0$ 
such that $\max(|\alpha q^{da}|, |\beta q^{ca}|) < 1$. 
In particular, if we know the forms of the coefficients of the 
power series expansions of the infinite $q$-Pochhammer products, 
$\QPochhammer{\beta q^{cb}}{q^{ca}}{\infty}$, with respect to $q$ 
(which may or may not be obvious depending on the application), then 
we can extend the proof of Lemma \ref{lemma_LS_partial_sum_exps} to form 
new proofs of analogous results for these generalized Lambert series expansions. 

One immediate application of these generalized results 
is a Lambert series generating function for the 
\emph{sums of squares function}, $r_2(n)$, given by 
\citep[\S 17.10, Thm.\ 311]{HARDYANDWRIGHT} \citep[\S 27.13(iv)]{NISTHB} 
\begin{align*} 
\sum_{n \geq 1} \frac{4 \cdot (-1)^{n+1} q^{2n+1}}{1-q^{2n+1}} & = 
     \sum_{m \geq 1} r_2(m) q^m, && 
     (\widetilde{f}, \widetilde{g}) := ((-1)^{n+1}, r_2(n)). 
\end{align*} 
However, we point out that the coefficients of the power series expansion of the 
$q$-Pochhammer symbol, $\QPochhammer{q}{q^2}{\infty}$, in $q$ does not 
appear to have a known closed-form formula, only related $q$-series expansions 
proved in the references. 
Nonetheless, the study of the analogous results to those proved within this article 
corresponding to these generalized cases is an interesting new direction for 
more careful future study. 

\renewcommand{\refname}{References}

\end{document}